\renewcommand{\arraystretch}{1.1}
\numberwithin{equation}{section}
\theoremstyle{plain}
\newtheorem{theorem}{Theorem}[section]
\newtheorem{corollary}[theorem]{Corollary}
\theoremstyle{definition}
\definecolor{internalLink}{rgb}{0,0,0.5}
\definecolor{citeLink}{rgb}{0,0.5,0}
\definecolor{urlLink}{rgb}{0,0.5,0.5}
\title[Upper and lower bound on delta-crossing number and tabulation of knots]{Upper and lower bound on delta-crossing number\\ and tabulation of knots up to four delta-crossings}
\author{Micha{\l} Jab{\l}onowski}
\address{Institute of Mathematics, Faculty of Mathematics, Physics and Informatics,\newline University of Gda\'nsk, 80-308 Gda\'nsk, Poland}
\keywords{minimal delta-crossing diagram, delta-crossing number, tabulation of knots}
\subjclass[2020]{57K10 (primary)} 
\email{michal.jablonowski@gmail.com}
\date{\today}
\begin{document}

\maketitle

\begin{abstract}

We will strengthen the known upper and lower bounds on the delta-crossing number of knots in therms of the triple-crossing number. The latter bound turns out to be strong enough to obtain (unknown values of) triple-crossing numbers for a few knots. We also prove that we can always find at least one tangle from the set of four tangles, in any triple-crossing projections of any non-trivial knot or non-split link. In the last section, we enumerate and generate tables of minimal delta-diagrams for all prime knots up to the delta-crossing number equal to four. We also give a concise survey about known inequalities between integer-valued classical knot invariants.
\end{abstract}

\section{Introduction}

\noindent
It is known that any knot and any link has a delta-crossing diagram i.e. a diagram that can be decomposed into delta-crossing tangles \cite{NSS15}. The delta-crossing number of a knot $K$, denoted here by $c_{\Delta}(K)$, is defined as the least number of delta-crossings for any delta-crossing diagram of $K$. There are upper and lower bounds for the triple-crossing number \cite{NSS15}, in terms of the triple-crossing number $c_3$ and the canonical genus $g_c$, for knot $K$ we have: $2c_3(K)\geq c_{\Delta}(K) \text{ and }c_{\Delta}(K) \geq g_c(K).$
We will strengthen (because of the bound $c_3(K) \geq 2g_c(K)$ proved by the author in \cite{Jab20}) the lower bound.
\begin{theorem}\label{tmA}
	For any knot (or a link) $K$, we have $2c_{\Delta}(K) \geq c_3(K)$.
\end{theorem}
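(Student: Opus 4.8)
The plan is to convert a \emph{minimal} delta-crossing diagram of $K$ into a triple-crossing diagram while losing at most a factor of $2$, by replacing each delta-tangle locally by at most two triple crossings. Fix a diagram $D$ of $K$ realizing $c_{\Delta}(K)$; then $D$ is the union of $m:=c_{\Delta}(K)$ delta-tangles (copies of the delta-tangle or of its mirror image) glued along trivial, crossingless tangles. The proof then reduces to the following local claim: \emph{the delta-tangle, and its mirror image, is ambient isotopic rel the boundary of its disk to a tangle consisting of at most two triple-crossing tangles together with trivial tangles}. Granting this, I would carry out the replacement inside each of the $m$ delta-disks of $D$; because each replacement fixes the boundary of its disk, the trivial tangles of $D$ lying outside these disks are unchanged, so the pieces reassemble into a diagram $D'$ of $K$ whose multi-crossings are all triple crossings and whose number of triple crossings is at most $2m$. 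Hence $c_3(K)\le 2m=2c_{\Delta}(K)$, that is $2c_{\Delta}(K)\ge c_3(K)$. The unknot, the unlink and, more generally, split links cause no trouble: there the relevant numbers vanish or split additively, so it suffices to treat the non-trivial, non-split case.

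Everything thus rests on the local claim, and proving it is where I expect the main difficulty to be. The idea is to compare the delta-tangle with the ``triangle'' tangle $T$ obtained by perturbing a single triple crossing into general position, whose over/under information is the transitive one dictated by the height order at the triple point. For the over/under decoration of the delta-tangle that matches $T$, a single triple crossing already suffices; the remaining decorations — in particular the cyclic one, which is what actually occurs in a delta-crossing diagram — must be produced from two triple crossings. To obtain them I would reuse the construction behind the known inequality $2c_3(K)\ge c_{\Delta}(K)$, namely the identification of a single triple-crossing tangle with a product of two delta-tangles: reading that identity as an expression for one delta-tangle in terms of one triple crossing and one (mirror) delta-tangle, one is left only with absorbing the leftover mirror delta-tangle, and this can be done by an explicit finite sequence of Reidemeister moves supported inside the disk, at the cost of a second triple crossing. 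The bookkeeping here — checking that the endpoint permutations agree, tracking which strand runs over which at each stage, and making sure no strand leaves the disk — is routine but must be handled carefully; this is the delicate point.

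As a sanity check I would verify the bound on small examples: for the trefoil $c_3=2$ and $c_{\Delta}=1$, so the inequality is sharp there, and it is likewise consistent on the figure-eight knot; this also indicates that the factor $2$ cannot be removed in general, which is precisely what makes the resulting upper bound for $c_3$ useful — combined with existing lower bounds it pins down the triple-crossing number of several knots, as exploited in the next section.
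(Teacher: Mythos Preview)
Your overall strategy coincides with the paper's: start from a minimal delta-crossing diagram with $m=c_{\Delta}(K)$ delta-tangles and replace each one, rel boundary, by two triple-crossings to obtain a triple-crossing diagram with $2m$ triple points, whence $c_3(K)\le 2c_{\Delta}(K)$. So the architecture is correct and matches the paper.

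Where you diverge is in the justification of the local claim, and there you are making life harder than necessary. The paper does not derive the local replacement by ``reading backwards'' the identity (triple-crossing) $=$ (two delta-crossings) from \cite{NSS15} and then absorbing a leftover mirror tangle; it simply exhibits, by a single explicit picture, an isotopy rel boundary from one delta-crossing to a pair of adjacent triple-crossings (with the mirror case analogous). No bookkeeping, no intermediate Reidemeister analysis, no appeal to the reverse inequality is needed. Your proposed route through the reverse construction is not wrong in spirit, but as written it is only a sketch (``absorbing the leftover mirror delta-tangle \ldots\ at the cost of a second triple crossing'' is exactly the step you leave unverified), and it risks circularity in presentation by leaning on the $2c_3\ge c_{\Delta}$ construction to prove the companion bound. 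Also note a small slip: a delta-tangle by definition carries the \emph{cyclic} over/under pattern, so there is no ``over/under decoration of the delta-tangle that matches $T$'' to dispose of with a single triple crossing---both types in Figure~\ref{deltacrossing} genuinely require two. In short: keep your global argument, but replace your indirect local argument with a direct two-triple-crossing picture; that is all the paper does.
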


In Section \ref{s2} of this paper, we will also strengthen the above upper bound with additional assumptions as follows.
\begin{theorem}\label{tm1}
	For any knot $K$, let $t_1,t_2$ be the number of disjoint tangles $T_1,T_2$ (shown in Figure \ref{T1234}) respectively, embedded in the projection corresponding to a minimal triple-crossing diagram of $K$. Then we have $\;2c_3(K)-t_1-t_2\geq c_{\Delta}(K)$.
\end{theorem}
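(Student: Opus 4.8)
The plan is to take a minimal triple-crossing diagram $D$ of $K$, i.e.\ one with exactly $c_3(K)$ triple points, to convert it into a delta-crossing diagram by a purely local procedure carried out crossing by crossing, and then to exploit the tangles $T_1,T_2$ to shave off delta-crossings.

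First I would recall the local construction underlying the known bound $2c_3(K)\ge c_\Delta(K)$ of \cite{NSS15}: inside a small disk around each triple point, the three transverse strands (with their prescribed over/middle/under order) can be replaced by a tangle assembled from two delta-crossings having the same six boundary points and the same pattern of connections. Performing this substitution at all $c_3(K)$ triple points simultaneously yields a delta-crossing diagram of $K$ with $2c_3(K)$ delta-crossings. The feature I need is that this substitution is supported in disjoint disks, so that substitutions carried out inside disjoint sub-tangles of the projection of $D$ do not interfere with one another.

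Next, for each of the two model tangles $T_i$ ($i=1,2$) of Figure~\ref{T1234}, I would exhibit an explicit delta-crossing decomposition of $T_i$ — as it actually sits inside a triple-crossing diagram — that uses one delta-crossing fewer than the generic count: if $T_i$ carries $k_i$ triple points, I would draw $T_i$ as a tangle built from $2k_i-1$ delta-crossings sharing the same boundary arcs. This is the computational core of the proof and the step I expect to be the main obstacle: one must check that the over/middle/under data displayed in Figure~\ref{T1234} is precisely what makes the economy possible, that the boundary arcs of $T_i$ are left pointwise fixed so that the new local picture glues back into $D$, and that no additional crossings leak out of the disk occupied by $T_i$.

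Finally, using the hypothesis that the $t_1$ copies of $T_1$ and the $t_2$ copies of $T_2$ are pairwise disjoint sub-tangles of the projection of $D$, I would apply the economical decomposition inside each of these $t_1+t_2$ copies and the generic two-delta-crossing substitution at every triple point contained in none of them. Counting delta-crossings produces a delta-crossing diagram of $K$ with at most $2c_3(K)-t_1-t_2$ of them, whence $c_\Delta(K)\le 2c_3(K)-t_1-t_2$. The argument is insensitive to the number of components, so it applies verbatim to links.
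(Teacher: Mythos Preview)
Your proposal is correct and follows essentially the same route as the paper: reduce Theorem~\ref{tm1} to a diagram-level statement (the paper's Theorem~\ref{tm3}), invoke the \cite{NSS15} local replacement of each triple-crossing by two delta-crossings, and then show by explicit pictures that a copy of $T_1$ (one triple point) can be rewritten with a single delta-crossing and a copy of $T_2$ (two triple points) with at most three. One small point to adjust: the tangles $T_1,T_2$ in Figure~\ref{T1234} are \emph{projection}-level tangles, so your ``computational core'' must verify the saving for every assignment of $T/M/B$ labels at the triple points, not just one; the paper handles this by a short case analysis (swapping $T\leftrightarrow B$ is symmetric, and for $T_1$ the non-middle cases even collapse via Reidemeister moves).
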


For a general context, we start with a survey (where the inequality in Theorem\;\ref{tmA} fits with the label $\star$), giving concise information about known inequalities between integer-valued classical knot invariants. We give the visual graph of relations, examples showing that some invariants are non-comparable in general, and show selected relations that are not resolved with a direction of possible inequality.
\par
We present the graphs of a selected invariant inequalities, for any knot $K\hookrightarrow \mathbb{S}^3$. Where, (in the first diagram) an arrow $\rightarrow$ means $\leq$ , (in the second diagram) an arrow with double arrowheads $\color{gray}\leftrightarrow$ means that there are examples of knots where we have $>$ relation and examples of knots where we have $<$ relation, the arrows $\color{red}\rightarrow$ with label $\color{red}\emph{conj.}$ are relation where we don't know examples where we might have $>$ relation.

\noindent
\begin{tikzcd}[column sep=1.1cm, row sep=1.1cm]
	{2\tau}\arrow[]{d}\arrow[]{dr}{\color{red} 24}&[-0.5cm]&[-0.5cm]&[-0.5cm]&[-0.5cm]2u_c\arrow[bend left=0]{dr}{\color{red} 31}&[-0.5cm]&[-0.5cm]&[-0.5cm]&[-0.5cm]&[-0.5cm]\\[-0.5cm]
	{2|\tau|}\arrow[pos=.3]{dr}{\color{red} 22}&[-0.5cm]{2\nu^+}\arrow[pos=.3]{d}{\color{red} 23}&[-0.5cm]{2cl_4}\arrow{r}{\color{red} 12}&[-0.5cm]{2u_s}\arrow[]{r}{\color{red} 25}&[-0.5cm]2u^*_r\arrow[bend left=2,pos=0.2]{r}{\color{red} 35}&[-0.5cm]2u\arrow[bend left=2]{r}{\color{red} 29}\arrow[bend left=40]{rr}{\color{red} 10}&[-0.5cm]2cl&[-0.5cm]tr\arrow[bend left=0]{drr}{\color{red} 26}\\[-0.5cm]
	{|s|}\arrow{r}{\color{red} 14}&2g_4 \arrow[bend left=2,pos=.8,swap]{ur}{\color{red} 11} \arrow{r}{\color{red} 19} \arrow[]{dr}{\color{red} 21} &[-0.5cm]2g_r \arrow[swap]{r}{\color{red} 18} \arrow[bend left=5,swap]{urr}{\color{red} 30} &[-0.5cm] ul_b \arrow{r}{\color{red} 17} &[-0.5cm] u_b\arrow[bend left=16,swap]{ur}{\color{red} 15}\arrow[swap]{r}{\color{red} 16}&[-0.5cm]2g \arrow[pos=.5]{d}{\color{red} 27} \arrow[swap]{r}{\color{red} 6}\arrow[bend right=18]{ur}{\color{red} 4}&[-0.5cm] 2g_f \arrow[]{r}{\color{red} 5}&[-0.5cm] 2g_c \arrow{r}{\color{red} 3}\arrow{u}[bend left=30,pos=.4,]{\color{red} 28} &[-0.5cm] c_3 \arrow{r}[bend left=30,swap]{\color{red} 2}\arrow{d}{\color{red} \star}&[-0.5cm]c\\[-0.5cm] 
	{|\sigma|}\arrow[bend right=20,swap]{rrrr}{\color{red} 34} \arrow[pos=.4]{ur}{\color{red} 13}&[-0.5cm]\varepsilon_{\Sigma}\arrow[swap]{r}{\color{red} 32}&[-0.5cm] g_{ds} \arrow[swap]{ur}{\color{red} 20}&[-0.5cm]Ord_v\arrow[swap, pos=.3]{u}{\color{red} 33}&[-0.5cm]sp\Delta_t\arrow[, pos=.6,swap]{ur}{\color{red} 8}&[-0.5cm]td \arrow[bend right=57]{urrrr}{\color{red} 1}&[-0.5cm]degP_z\arrow[pos=.6,swap]{ur}{\color{red} 7}&[-0.5cm]\nicefrac{spV_t}{2}\arrow[bend right=8]{ur}{\color{red} 9}&[-0.5cm]2c_{\Delta}\\
\end{tikzcd}

In the above graph, the invariants, are: $\color{blue} c$ is the crossing number,
$\color{blue} g$ is the (Seifert) three-genus,
$\color{blue} g_f$ in the free genus,
$\color{blue} g_c$ in the canonical genus,
$\color{blue} u$ is the unknotting number,
$\color{blue} u_b$ is the band-unknotting number,
$\color{blue} ul_b$ is the band-unlinking number,
$\color{blue} g_4$ is the slice genus,
$\color{blue} g_r$ is the ribbon slice genus,
$\color{blue} g_{ds}$ is the doubly slice genus, 
$\color{blue} \sigma$ is the knot signature,
$\color{blue} \tau$ is the Ozsvath-Szabo's Tau-Invariant,
$\color{blue} s$ is the Rasmussen’s s-invariant,
$\color{blue} sp\Delta_t$ is the span of the Alexander polynomial $\Delta(t)$,
$\color{blue} spV_t$ is the span of the Jones polynomial $V(t)$,
$\color{blue} degP_z$ is the $z$-degree of the HOMFLY-PT polynomial $P(v,z)$,
$\color{blue} cl_4$ is the 4D clasp number,
$\color{blue} cl$ is the clasp number,
$\color{blue} \nu^+$ is the Hom and Wu's invariant,
$\color{blue} u_s$ is the slicing number,
$\color{blue} td$ is the skein tree depth,
$\color{blue} tr$ is the trivializing number,
$\color{blue} u_c$ is the concordance unknotting number,
$\color{blue} u^*_r$ is the weak ribbon unknotting number,
$\color{blue} \varepsilon_{\Sigma}$ is the "double-cover" epsilon invariant,
$\color{blue} Ord_v$ is the torsion order.
\\

\noindent
\begin{tikzcd}[column sep=1.24cm, row sep=1.1cm]
	&[-0.5cm]&[-0.5cm]&[-0.5cm]&[-0.5cm]2u_c&[-0.5cm]&[-0.5cm]&[-0.5cm]&[-0.5cm]&[-0.5cm]\\[-0.5cm]
	{2|\tau|}\arrow[bend right=32,gray,<->]{dd}&[-0.5cm]&[-0.5cm]{2cl_4}&[-0.5cm]{2u_s}\arrow[red,swap]{l}{conj.}\arrow[red,swap]{ur}{conj.}&[-0.5cm]&[-0.5cm]2u\arrow[bend right=0,gray,<->]{d}\arrow[bend right=15,gray,<->]{drrr}&[-0.5cm]2cl\arrow[bend right=10,gray,<->]{drr}\arrow[bend left=50,red]{drrr}{conj.}&[-0.5cm]tr\arrow[bend right=20,gray,<->]{dr}\arrow[red,swap]{l}{conj.}\\[-0.5cm]
	{|s|}\arrow[bend right=0,gray,<->]{d}&&[-0.5cm]&[-0.5cm]&[-0.5cm]&[-0.5cm]2g \arrow[bend right=10,gray,<->]{dr}\arrow[bend left=15,gray,<->]{ulll}&[-0.5cm]td&[-0.5cm] &[-0.5cm] c_3 \arrow[bend left=20,red]{ll}{conj.}&[-0.5cm]c\\[-0.5cm]
	{|\sigma|} &[-0.5cm]&[-0.5cm] g_{ds}\arrow[bend right=10,gray,<->]{uu} &[-0.5cm]&[-0.5cm]sp\Delta_t\arrow[bend left=55,gray,<->]{uullll}\arrow[bend right=20,gray,<->]{rrr}&[-0.5cm]&[-0.5cm]degP_z\arrow[gray,<->]{r}&[-0.5cm]\nicefrac{spV_t}{2}&[-0.5cm]2c_{\Delta}\arrow[,gray,<->]{ur}\\
\end{tikzcd}

One find precise definitions of those invariants and inequalities as follows.\\ 
Relation {\color{red} 1} in \cite{Cro89},
{\color{red} 2, 9} in \cite{Ada13},
{\color{red} 3} in \cite{Jab20},
{\color{red} 5} in \cite{KobKob96},
{\color{red} 6} in \cite{Mor87},
{\color{red} 7} in \cite{Mor86},
{\color{red} 8} in \cite{Gil82},
{\color{red} 4, 11, 19, 25, 29, 35, 30} in \cite{Shi74},
{\color{red} 26} in \cite{Han14},
{\color{red} 13} in \cite{Mur65},
{\color{red} 14} in \cite{Ras10},
{\color{red} 17, 18, 33} in \cite{JMZ20},
{\color{red} 15, 16} in \cite{HNT90},
{\color{red} 20, 21} in \cite{KarSwe21},
{\color{red} 22} in \cite{OzsSza03},
{\color{red} 23, 24} in \cite{HomWu16},
{\color{red} 12, 31} in \cite{OweStr16},
{\color{red} 27} in \cite{SchTho89},
{\color{red} 10, 28} in \cite{Hetal11},
{\color{red} 32} in \cite{AGL17},
{\color{red} 34} in \cite{Fel16}.

The following invariants cannot be related to each other by an inequality for any knot.

\begin{enumerate}
	\item $u$ and $g$, for example $u(6a2)<g(6a2)$, $u(7a6)>g(7a6)$;
	\item $c_3$ and $2cl$, for example $c_3(9a40)<2cl(9a40)$, $c_3(5a1)>2cl(5a1)$;
	\item $c_3$ and $2u$, for example $c_3(9a40)<2u(9a40)$, $c_3(5a1)>2u(5a1)$;
	\item $|\sigma|$ and $|s|$, for example $|\sigma(10n13)|<|s(10n13)|$, $|\sigma(9n4)|>|s(9n4)|$;
	\item $|\sigma|$ and $2|\tau|$, for example $|\sigma(10n13)|<2|\tau(10n13)|$, $|\sigma(9n4)|>2|\tau(9n4)|$;
	
	\item $g_{ds}$ and $2cl_4$, for example $g_{ds}(7a6)<2cl_4(7a6)$, $g_{ds}(6a3)>2cl_4(6a3)$;
	
	\item $cl_4$ and $g$, for example $cl_4(6a2)<g(6a2)$, $cl_4(9a36)>g(9a36)$;
	
	\item $\nicefrac{spV_t}{2}$ and $sp\Delta_t$, for example $\nicefrac{spV_t(7a7)}{2}<sp\Delta_t(7a7)$, $\nicefrac{spV_t(7a4)}{2}>sp\Delta_t(7a4)$;
	\item $\nicefrac{spV_t}{2}$ and $degP_z$, for example $\nicefrac{spV_t(6a1)}{2}<degP_z(6a1)$, $\nicefrac{spV_t(6a3)}{2}>degP_z(6a3)$;
	\item $tr$ and $c_3$, for example $tr(9a36)<c_3(9a36)$, $tr(9n8)>c_3(9n8)$;
	\item $sp\Delta_t$ and $2|\tau|$, for example $sp\Delta_t(12n293)<2|\tau(12n293)|$, $sp\Delta_t(4a1)>2|\tau(4a1)|$;
	
	\item $2c_{\Delta}$ and $c$, for example $2c_{\Delta}(3a1)<c(3a1)$, $2c_{\Delta}(9a31)>c(9a31)$;
	\item $2g$ and $degP_z$, $2g(K15n14891)>degP_z(K15n14891)$, for $2g<degP_z$ see \cite{Mor86}.
	
\end{enumerate}

\par
In this paper, we also prove that we can always find at lest one of set of four tangles in any triple-crossing projections of any non-trivial knot. In \cite{NSS15} the authors list up all delta-crossing diagrams of prime knots with the delta-crossing number equal to $2$ and $3$ and found delta-crossing diagrams of prime knots with the delta-crossing number equal to $4$ for $52$ prime knots. In Section \ref{s3} of this paper, we enumerate and generate tables of minimal delta-diagrams for all prime knots up to the delta-crossing number equal to $4$.

\section{Definitions}\label{s1}

The \emph{projection} of a knot or a link $K\subset \mathbb{R}^3$ is its image under the standard projection $\pi:\mathbb{R}^3\to\mathbb{R}^2$ (or into a $2$-sphere) such that it has only a finite number of self-intersections, called \emph{multiple-points}, and in each multiple-point each pair of its strands are transverse. 
\par
If each multiple-point of a projection has multiplicity three then we call this projection a \emph{triple-point projection}. The \emph{triple-crossing} is a triple-point crossing with the strand labeled $T, M, B$, for top, middle and bottom.
\par
The \emph{triple-crossing diagram} is a triple-crossing projection such that each of its triple points is a triple-crossing, such that $\pi^{-1}$ of the strand labeled $T$ (in the neighborhood of that triple point) is on the top of the strand corresponding to the strand labeled $M$, and the latter strand is on the top of the strand corresponding to the strand labeled $B$ (see Figure \ref{r01}).

\begin{figure}[h!t]
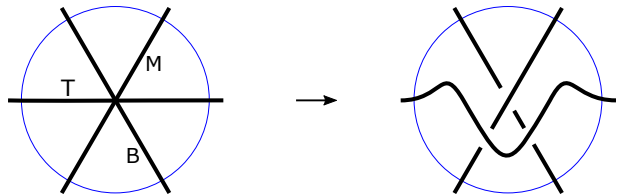

	\centering		
	\begin{lpic}[]{./M01(8cm)}
		
	\end{lpic}
	\caption{A deconstruction/construction of a triple-crossing.}
	\label{r01}
\end{figure}

A \emph{delta-crossing} is defined to be a tangle of three arcs with three double-crossings as in Figure \ref{deltacrossing}, which are appeared in a $\Delta$-move (or $\Delta$-unknotting operation \cite{MurNak89}). A \emph{delta-crossing diagram} is a double-crossing diagram that can be decomposed into delta-crossing tangles joined by simple arcs.

\begin{figure}[h!t]
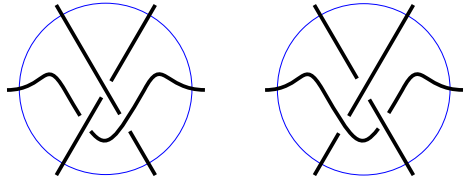

	\centering		
	\begin{lpic}[]{./deltacrossing(6cm)}
		
	\end{lpic}
	\caption{Two types of a unoriented delta-crossing.}
	\label{deltacrossing}
\end{figure}

The \emph{triple-crossing number} of a knot or link $K$, denoted $c_3(K)$, is the least number of triple-crossings for any triple-crossing diagram of $K$. The \emph{minimal triple-crossing diagram} of a knot $K$ is a triple-crossing diagram of $K$ that has exactly $c_3(K)$ triple-crossings. We define the \emph{delta-crossing number} of a knot $K$, denoted by $c_{\Delta}(K)$, as the least number of delta-crossings among all delta-crossing diagrams of $K$.
\par
A \emph{natural orientation} on a triple-crossing diagram or a delta-crossing diagram is an orientation of each component of that link, such that in each triple-crossing or a delta-crossing the (outer) strands are oriented in-out-in-out-in-out, as we encircle the crossing boundary. It is known (\cite{AHP19,NSS15}) that every orientation of the triple-crossing diagram or delta-crossing diagram obtained from an oriented knot is the natural orientation.

\section{Bound on delta-crossing number}\label{s2}

\begin{proof}[Proof of Theorem \ref{tmA}]
	Let $D^{\Delta}$ be a minimal delta-crossing diagram of a link $K$ with $n=c_{\Delta}$ number of delta-crossings. We can resolve $D^{\Delta}$ to a triple-crossing diagram with $2n$ triple-crossing by locally performing resolution of each delta-crossing into two triple-crossings as in Figure \ref{double_triple} (the other case of delta-crossing is analogous).

\begin{figure}[h!t]
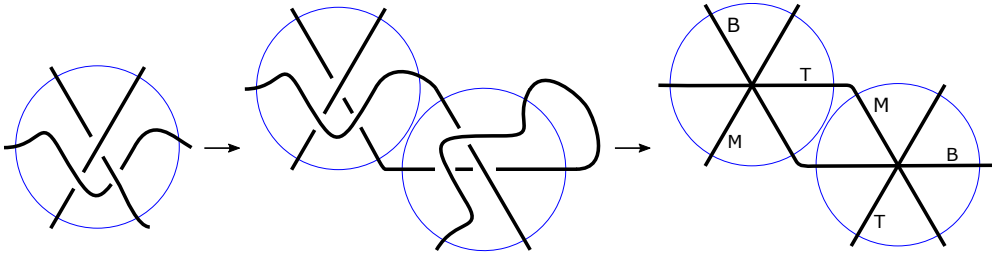

	\centering		
	\begin{lpic}[]{./double_triple(13cm)}
		
	\end{lpic}
	\caption{Transforming a delta-crossing triple-crossing into two triple-crossings.}
	\label{double_triple}
\end{figure}

\end{proof}

Because of equality of upper- and lower-bounds, an exact values of (up to now unknown) triple-crossing numbers for knots, such as the following can be obtained.

\begin{corollary}
	$$c_3(9a5) = c_3(9a13) = c_3(9a16) = 6.$$
\end{corollary}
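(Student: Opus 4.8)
The plan is to determine $c_3$ for each of $9a5$, $9a13$ and $9a16$ by trapping it between a lower bound from \cite{Jab20} and an upper bound coming from the resolution construction used to prove Theorem \ref{tmA}; for these three knots the two bounds coincide at $6$.

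For the upper bound, I would appeal to the delta-crossing tabulation: each of $9a5$, $9a13$, $9a16$ admits a delta-crossing diagram with at most three delta-crossings --- such diagrams are catalogued in \cite{NSS15} and in Section \ref{s3} of the present paper --- so $c_{\Delta}\le 3$ for each. Applying the local move of Figure \ref{double_triple} at every delta-crossing, exactly as in the proof of Theorem \ref{tmA}, produces a triple-crossing diagram of the same knot with $2\cdot 3=6$ triple-crossings, whence $c_3\le 6$.

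For the lower bound, I would use that these three knots are alternating. By the classical theorem of Crowell and Murasugi, Seifert's algorithm on a reduced alternating diagram yields a minimal-genus Seifert surface, so the canonical genus of an alternating knot equals its Seifert genus; from the knot tables this common value is $3$ for each of $9a5$, $9a13$, $9a16$. The bound $c_3(K)\ge 2g_c(K)$ from \cite{Jab20} then gives $c_3\ge 6$, and combining with the upper bound we conclude $c_3(9a5)=c_3(9a13)=c_3(9a16)=6$.

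Beyond Theorem \ref{tmA} itself, the argument is a matter of supplying the correct table data, and the only steps that need care are verifying that these knots have delta-crossing number $3$ and canonical genus $3$; the latter is exactly where the alternating hypothesis is used, to identify $g_c$ with $g$. Once those two inputs are checked there is nothing further to prove.
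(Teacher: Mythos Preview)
Your upper bound argument is essentially identical to the paper's: both use $c_{\Delta}=3$ for these three knots (from the tabulation) together with Theorem~\ref{tmA} to obtain $c_3\le 6$.

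For the lower bound you take a genuinely different route. The paper argues that $9a5$, $9a13$, $9a16$ are absent from the exhaustive table of knots with $c_3\le 5$ compiled in \cite{Jab22}, hence $c_3\ge 6$ for each. You instead invoke the bound $c_3\ge 2g_c$ from \cite{Jab20} together with the claim that $g_c=g=3$ for all three (using alternating-ness to identify $g_c$ with $g$). Provided the table look-up $g=3$ is correct for each of $9a5$, $9a13$, $9a16$, your argument is valid and arguably cleaner: it avoids appealing to a full computer enumeration and instead rests on a single classical invariant. The paper's approach, on the other hand, does not depend on any particular value of the genus and would work uniformly for any knot with $c_{\Delta}=3$ that fails to appear in the $c_3\le 5$ table. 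So the trade-off is elegance versus robustness: your method needs the genus to hit the exact value $3$, while the paper's method only needs absence from a finite list. You should explicitly verify (and cite) the genus values, since that is the one input on which your whole lower bound hinges.
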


\begin{proof}
	The above corollary follows from the facts that: results in Table\;\ref{table1} show that all these knots have the delta-crossing number equal to $3$; the inequality in Theorem\;\ref{tmA} tells us that all these knots have the triple-crossing number equal at most $6$; finally all these knots are not in the table of knots with the triple-crossing number equal at most $5$, obtained in \cite{Jab22}.
	
\end{proof}

Theorem \ref{tm1} follows from the following theorem.

\begin{theorem}\label{tm3}
	Let $D^t$ be a triple-crossing diagram of a link $L$ with $n$ triple-crossings and let $t_1,t_2$ be the number of disjoint tangles $T_1,T_2$ (shown in Figure \ref{T1234}) respectively embedded in the graph corresponding to $D^t$. Then $$\;2n-(t_1+t_2)\geq c_{\Delta}(L).$$
	
\end{theorem}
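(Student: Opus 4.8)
The plan is to refine the local construction behind the known inequality $2c_3(K)\ge c_{\Delta}(K)$ of \cite{NSS15}, keeping careful track of the pictures at the tangles $T_1$ and $T_2$.

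First I would recall the local move behind that inequality: each triple-crossing of a triple-crossing diagram can be replaced, inside a disc meeting the diagram exactly in the six legs of the crossing, by a tangle built from two delta-crossings and simple arcs which agrees with the triple-crossing on those six legs. Carrying this out at every triple-crossing of $D^t$ and leaving the remaining simple arcs of $D^t$ untouched produces a delta-crossing diagram of $L$; by itself this only recovers $c_{\Delta}(L)\le 2n$, so the entire point is that the count can be lowered at the distinguished tangles.

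Second, I would analyse the tangles. Fix a collection of $t_1+t_2$ pairwise disjoint sub-tangles of the graph of $D^t$, $t_1$ of them of type $T_1$ and $t_2$ of them of type $T_2$; disjointness guarantees that the triple-crossing sitting inside one of these sub-tangles is not shared with any other. The assertion to be read off from Figure \ref{T1234} is that, once the triple-crossing of a sub-tangle of type $T_1$ (respectively $T_2$) has been replaced by two delta-crossings as above, the arcs prescribed by $T_1$ (resp.\ $T_2$) connect the six legs of that crossing in such a way that one of the two delta-crossings can be peeled off by a planar isotopy together with Reidemeister I and II moves performed entirely inside the disc of the sub-tangle; thus each such sub-tangle is realised with a single delta-crossing. (More generally, the count below uses only that a distinguished sub-tangle can be realised with one fewer delta-crossing than the naive two-per-triple-crossing replacement gives.) Because the chosen sub-tangles lie in pairwise disjoint discs, all of these replacements and simplifications can be performed simultaneously, independently of one another.

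Third, I would count: of the $n$ triple-crossings of $D^t$, exactly $t_1+t_2$ lie in a distinguished sub-tangle and each of these contributes one delta-crossing, while the remaining $n-(t_1+t_2)$ contribute two delta-crossings each. Reassembling along the unchanged simple arcs of $D^t$ yields a delta-crossing diagram of $L$ with
\[
2\bigl(n-(t_1+t_2)\bigr)+(t_1+t_2)\;=\;2n-(t_1+t_2)
\]
delta-crossings, whence $c_{\Delta}(L)\le 2n-(t_1+t_2)$. This is the theorem, and Theorem \ref{tm1} is the special case in which $K$ is a knot and $D^t$ is a minimal triple-crossing diagram of $K$, so that $n=c_3(K)$. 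The step I expect to be the genuine obstacle is the second one: verifying, for each of $T_1$ and $T_2$, that the two-delta-crossing model of its triple-crossing really does collapse to one delta-crossing. This is a figure-chase that must be compatible with the top/middle/bottom over-under data of the triple-crossing and with the induced natural orientation, and one must check that the simplifying Reidemeister moves never leave the disc of the sub-tangle, so that the disjointness---hence the independence---of the local moves is preserved. Spelling out the local pictures and the Reidemeister moves explicitly, which is presumably the content of Figure \ref{T1234}, is the clean way to finish.
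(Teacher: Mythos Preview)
Your approach is the same as the paper's: start from the two-delta-crossings-per-triple-crossing replacement of \cite{NSS15}, improve locally at each distinguished tangle, and count. The one concrete correction concerns the structure of $T_2$. You treat both $T_1$ and $T_2$ as tangles containing a single triple-crossing, so that the saving at each is ``two delta-crossings down to one.'' In the paper, $T_1$ is indeed a single triple-crossing carrying a loop (one triple-crossing $\to$ at most one delta-crossing; in fact zero when the non-loop strand is not the middle strand), but $T_2$ contains \emph{two} triple-crossings, and the local replacement in Figure~\ref{2resolution} turns it into at most three delta-crossings. The saving is still one per tangle, so the count reads
\[
t_1\cdot 1 \;+\; t_2\cdot 3 \;+\; 2\bigl(n - t_1 - 2t_2\bigr) \;=\; 2n - (t_1+t_2),
\]
and your own parenthetical---that one only needs each distinguished sub-tangle to be realised with one fewer delta-crossing than the naive replacement gives---is exactly the right general principle and already covers this. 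A minor stylistic difference: the paper exhibits the economical local models directly (Figures~\ref{1resolution} and~\ref{2resolution}) rather than first performing the generic replacement and then simplifying by Reidemeister moves inside the disc; either route is fine once the correct picture of $T_2$ is in hand.
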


\begin{figure}[h!t]
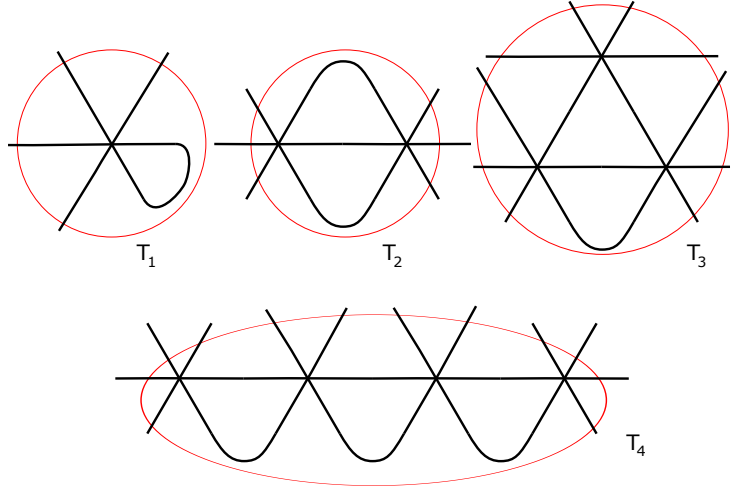

	\centering		
	\begin{lpic}[]{./T1234(9.5cm)}
		
	\end{lpic}
	\caption{Triple-point tangles $T_1,T_2,T_3, T_4$}
	\label{T1234}
\end{figure}

\begin{proof}
	
	From \cite{NSS15} we know that $D^t$ can be resolved to a delta-crossing diagram with $2n$ delta-crossing by locally performing resolution of each triple-crossing into two double-crossings as in Figure \ref{double} (the other case exchanging $T$ with $B$ is analogous).
	
\begin{figure}[h!t]
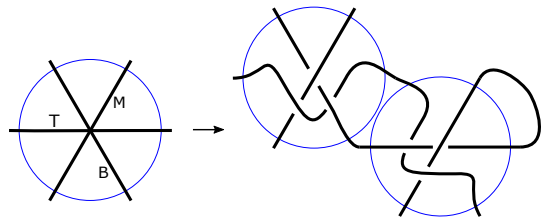

	\centering		
	\begin{lpic}[]{./double(7cm)}
		
	\end{lpic}
	\caption{Transforming a triple-crossing into two delta-crossings.}
	\label{double}
\end{figure}

To prove the theorem it is sufficient to show that any tangle from $T_1,T_2,T_3$ (with $r$ triple-crossings) in $D^t$ can be resolved to a delta-crossing tangle with at most $2r-1$ delta-crossings (we resolve the other triple-crossings not in any of the tangles as in Figure \ref{double}).
\par
We can transform a tangle $T_1$ to one delta-crossing tangle as in Figure \ref{1resolution}, the other case exchanging $T$ with $B$ is analogous. If in the tangle, the strand that is not forming the loop are not the middle strand, then the crossing can be eliminated by the standard Reidemeister moves.

\begin{figure}[h!t]
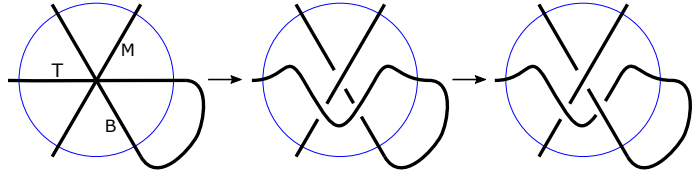

	\centering		
	\begin{lpic}[]{./1resolution(9cm)}
		
	\end{lpic}
	\caption{Transforming a triple-crossing with loop.}
	\label{1resolution}
\end{figure}

We can transform a tangle $T_2$ to at most three delta-crossings as in Figure \ref{2resolution} (the other case changing the labels $T$, $M$ and $B$ is analogous or resolves both triple-crossings to zero crossings).

\begin{figure}[h!t]
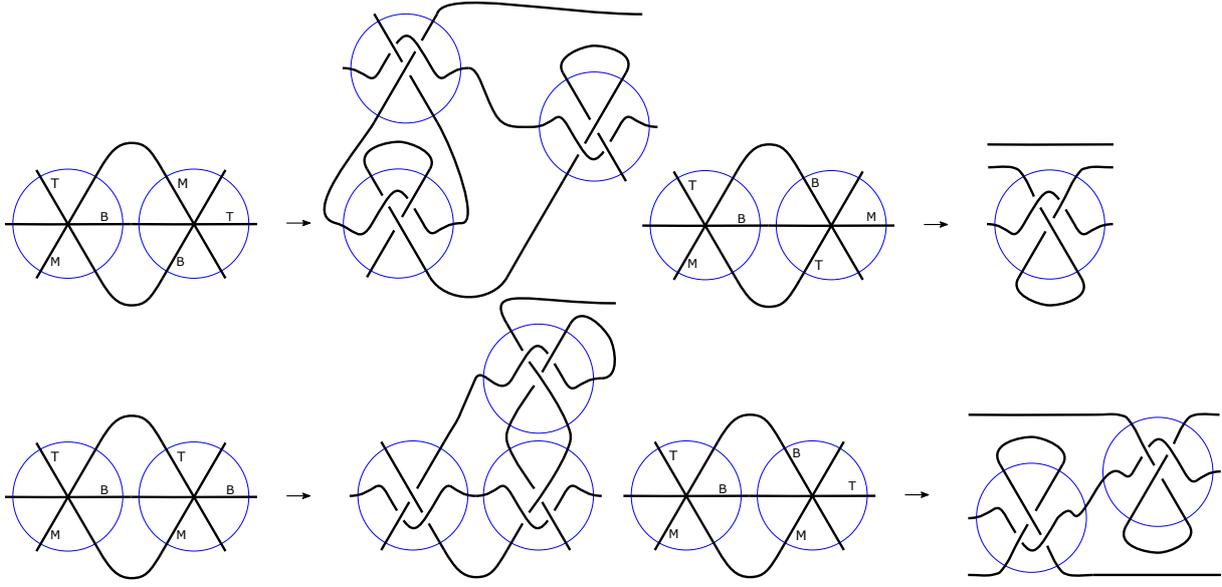

	\centering		
	\begin{lpic}[]{./2resolution(16cm)}
		
	\end{lpic}
	\caption{Transforming two triple-crossings.}
	\label{2resolution}
\end{figure}

\end{proof}

The tangles $T_1$ or $T_2$ do not always exist in any triple-crossing projection. However, we prove that we can always find at least one tangle from the following set of four tangles. 

\begin{theorem}\label{tm2}
	
	Let $D^t$ be a triple-crossing diagram of a non-trivial knot or non-split link $K$ and let $t_1,t_2,t_3,t_4$ be the number of disjoint tangles $T_1,T_2,T_3,T_4$ respectively (shown in Figure \ref{T1234}) embedded in the graph corresponding to $D^t$. Then $t_1+t_2+t_3+t_4\geq 1$.
	
\end{theorem}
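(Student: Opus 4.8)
The plan is to work entirely with the shadow of $D^t$. Associate to $D^t$ the planar multigraph $G$ whose vertices are the triple-crossings and whose edges are the arcs of the projection joining them, loops and parallel edges allowed; every vertex of $G$ has degree $6$. Since $K$ is a non-trivial knot or a non-split link, $D^t$ must have at least one triple-crossing --- a crossingless projection represents the unknot or a split link, both excluded --- so $G$ is non-empty, and its shadow in $\mathbb{S}^2$ is connected, a disconnected projection producing a splitting sphere; hence $G$ is connected. Writing $n=|V(G)|\ge 1$, the $6$-regularity gives $|E(G)|=3n$, and Euler's formula for $G\subset\mathbb{S}^2$ gives $|F(G)|=2-n+3n=2n+2$ faces.

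Next I would run the usual face-counting estimate: the degrees of the faces of $G$ sum to $2|E(G)|=6n$, while $3|F(G)|=6n+6>6n$, so $G$ has a face of degree at most $2$, i.e.\ a monogon or a bigon. I would also record that a $6$-regular graph has no bridge --- deleting a bridge leaves a component with exactly one vertex of odd degree, impossible by the handshake lemma --- so a face of degree $2$ is bounded by two \emph{distinct} edges, a genuine bigon, rather than by a single edge traversed twice. Thus, inside its shadow, $D^t$ contains either a triple-crossing incident to a monogon or a pair of triple-crossings cobounding a bigon.

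It then remains to match these local pictures with the tangles of Figure \ref{T1234}. A monogon is cut out by a single loop at a triple-crossing $v$; reading off how the remaining four half-edges at $v$ are paired by the rest of $G$, a neighbourhood of $v$ is a copy of $T_1$ (in the sub-case where the strand running through is not the middle one the crossing is even Reidemeister-reducible, but its shadow is still $T_1$), so $t_1\ge 1$. If the bigon has its two edges joining two \emph{distinct} triple-crossings $u\ne v$, then --- those two edges being consecutive in the rotation at each of $u$ and $v$ --- the pair $\{u,v\}$ together with the bigon is, up to a planar symmetry and a relabelling of $\{T,M,B\}$ (immaterial here), one of $T_2,T_3,T_4$, so $t_2+t_3+t_4\ge1$. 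The only remaining possibility is that both bigon edges are loops at one vertex; there a short extra step --- peel off an innermost loop and re-run the face count inside the disk it bounds --- again produces a monogon, reducing to the first case. In every case $t_1+t_2+t_3+t_4\ge1$.

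The step that genuinely needs care is the matching in the previous paragraph: one must verify that the classification of the shadow around a monogon (respectively around a bigon between distinct crossings) is exhaustive and that each case is, after a planar symmetry and relabelling, verbatim one of the pictures $T_1$ (respectively $T_2,T_3,T_4$) of Figure \ref{T1234}. This reduces to a finite inspection of a single $6$-valent vertex and of two $6$-valent vertices sharing two consecutive edges; all the topology and counting leading up to it is routine.
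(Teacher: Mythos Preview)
Your Euler-formula step is correct and does show that the shadow $G$ has a face of degree at most~$2$; the monogon case is fine and does yield $T_1$. The gap is in the bigon case. Your central claim---that any bigon between two distinct triple-crossings is, up to planar symmetry, one of $T_2,T_3,T_4$---is not true, and the paper's own proof makes this evident: the author assumes $t_1=t_2=t_3=t_4=0$, deduces $f_1=0$, and then \emph{continues to count bigons} in $G$, establishing for instance that each bigon is adjacent only to faces of degree~$>3$ and that each $m$-gon has at most $\lfloor 2m/3\rfloor$ adjacent bigons. If a single bigon already forced one of $T_2,T_3,T_4$, all of this would be vacuous. The tangles $T_2,T_3,T_4$ of Figure~\ref{T1234} are strictly more restrictive patterns than ``two $6$-valent vertices sharing two consecutive edges''; your proposed finite inspection would produce just one local picture and would not match the figure.

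The paper's argument is therefore genuinely subtler than what you outline. From $f_1=0$ and the identity $2f_1+f_2=6+\sum_{k\ge 4}(k-3)f_k$ one gets $2f_2+f_3\ge 2n+8$. The absence of $T_2,T_3,T_4$ is then used as a \emph{combinatorial constraint on how bigons can cluster}: it forces every bigon to border two faces of degree~$>3$ and caps the number of bigons adjacent to any $m$-gon at $\lfloor 2m/3\rfloor$. Double-counting bigon/$m$-gon adjacencies against $2|E|-2|F|-f_3$ then gives $2f_2\le 2f_2-12$, a contradiction. To salvage your approach you would first have to read off from Figure~\ref{T1234} exactly which bigon configurations $T_2,T_3,T_4$ encode, and then replace your one-line ``bigon $\Rightarrow$ done'' with a counting argument of this kind.
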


\begin{proof}
	Denote by $n$ the number of crossings in $D^t$, and consider the projection of $D^t$ as a planar graph $G$.
	Let $f_i$ be the number of faces of $G$ with $i$ edges (including
	the outer region), from \cite{AHP19} we know that 
	$$2f_1+f_2=6+f_4+2f_5+3f_6+4f_7+\ldots .$$
	From the graph theory (using also the Euler formula) we have that in $G$ the number of vertices $|V(G)|=n$, the number of edges $|E(G)|=\frac{1}{2}(f_1+2f_2+3f_3+4f_4+\ldots)=3n$ and the number of faces $|F(G)|=f_1+f_2+f_3+f_4+\ldots=2n+2$.
	\par
	Assume the contrary that $t_1=t_2=t_3=t_4=0$. Then $f_1=0$, so we have 
	
	\begin{center}
		\begin{longtable}[ht]{rl}
			$f_2= $&$6+f_4+2f_5+3f_6+4f_7+\ldots \geq$ \\
			$\geq $&$6+f_4+f_5+f_6+f_7+\ldots=$\\
			$=$&$6+|F(G)|-f_2-f_3=2n+8-f_2-f_3.$
		\end{longtable}
	\end{center}
	
	\pagebreak
	
	Therefore $2f_2+f_3\geq 2n+8$. 
	\par
	Now we count the maximal number of bigons in $G$ avoiding having tangles $T_2,T_3,T_4$. To each $4$-gon there are at most $2$ adjacent bigons, to each $5$-gon there are at most $3$ adjacent bigons, to each $6$-gon there are at most $4$ adjacent bigons, ... , to each $m$-gon there are at most $\lfloor \frac{2m}{3} \rfloor$ adjacent bigons.
	\par
	
	Because each bigon is adjacent to two different $m$-gons (for $m>3$) we have
	
	\begin{center}
		\begin{longtable}[ht]{rl}
			$2f_2\leq $&$2f_4+3f_5+4f_6+4f_7+\ldots +\lfloor \frac{2m}{3} \rfloor f_m+\ldots\leq$ \\
			$\leq $&$2f_4+3f_5+4f_6+5f_7+\ldots +(m-2) f_m+\ldots=$\\
			$=$&$2|E(G)|-2|F(G)|-f_3=2n-4-f_3\leq 2f_2-12.$
		\end{longtable}
	\end{center}
	
	A contradiction, therefore $t_1+t_2+t_3+t_4\geq 1$.
	
\end{proof}

\begin{corollary}
	If a projection of a minimal triple-crossing diagram of non-trivial knot $K$ does not have embedded tangles neither $T_3$ nor $T_4$, then $2c_3(K)-1\geq c_{\Delta}(K)$.
\end{corollary}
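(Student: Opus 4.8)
The plan is to derive this directly from Theorem~\ref{tm2} and Theorem~\ref{tm1} (the latter being Theorem~\ref{tm3} specialized to minimal diagrams). First I would fix a minimal triple-crossing diagram $D^t$ of $K$ whose underlying projection contains no embedded copy of $T_3$ and no embedded copy of $T_4$; by hypothesis such a diagram exists, and it carries exactly $n=c_3(K)$ triple-crossings. Denoting by $t_1,t_2,t_3,t_4$ the maximal numbers of pairwise disjoint tangles $T_1,T_2,T_3,T_4$ embedded in the planar graph of $D^t$, the assumption is precisely $t_3=t_4=0$.

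Next I would apply Theorem~\ref{tm2} to $D^t$: since $K$ is a non-trivial knot (in particular a non-split link), it gives $t_1+t_2+t_3+t_4\ge 1$, hence $t_1+t_2\ge 1$. Then I would feed this same diagram into Theorem~\ref{tm1} (equivalently Theorem~\ref{tm3} with $n=c_3(K)$), obtaining $2c_3(K)-(t_1+t_2)\ge c_{\Delta}(K)$. Combining the two inequalities yields $2c_3(K)-1\ge 2c_3(K)-(t_1+t_2)\ge c_{\Delta}(K)$, which is the desired bound.

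There is no real obstacle here beyond bookkeeping. The one point to keep straight is that the quantity $t_1+t_2$ appearing in ``$t_1+t_2\ge 1$'' and in ``$2c_3(K)-(t_1+t_2)\ge c_{\Delta}(K)$'' must be evaluated on the \emph{same} fixed minimal diagram $D^t$; this is legitimate because Theorem~\ref{tm2} holds for any triple-crossing diagram of a non-trivial knot and Theorem~\ref{tm1} applies to a minimal one, so both can be read off $D^t$. It is also worth remarking that the non-triviality hypothesis enters only through Theorem~\ref{tm2}, and that without the assumption $t_3=t_4=0$ one cannot in general conclude $t_1+t_2\ge 1$ from Theorem~\ref{tm2} alone, so this hypothesis is genuinely what makes the argument go through.
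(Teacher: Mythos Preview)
Your argument is correct and is exactly the intended one: the paper states this corollary without proof immediately after Theorem~\ref{tm2}, relying on the reader to combine $t_1+t_2\ge 1$ (from Theorem~\ref{tm2} with $t_3=t_4=0$) with the bound $2c_3(K)-(t_1+t_2)\ge c_{\Delta}(K)$ from Theorem~\ref{tm1}/\ref{tm3} applied to that same minimal diagram. Your care in noting that both inequalities must be read off the same fixed diagram is appropriate and matches the setup.
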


\section{Knot tabulation}\label{s3}

We generate here the table of prime knot diagrams with the delta-crossing number up to four. First notice that we can obtain every delta-crossing diagram from a triple-point diagram because every triangle in a delta-crossing can be homotopically contracted to a point forming a triple-point. The tabulation of delta diagrams for just one delta-crossing is very easy so we may from now on assume that we have at least two delta-crossings.
\par
Then, we take the tables of all prime, connected, oriented, triple-point projections, such that each pair of shadows are neither isotopic nor is one of them isotopic to the mirror image of the other. This sets are taken from \cite{JabTro20} and called there $Tb_n$, where $n>1$ is the number of triple-points. From these sets we take only those that are projections of the knots, for $n=2, 3, 4$ we have $1, 5, 65$ projections respectively.
\par
Because every orientation of the delta-crossing diagram is the natural orientation, we can then resolve each triple-point to four possible delta-crossings $S, T, U, W$, as in Figure \ref{STUW}. This gives us $4^2+5\cdot 4^3+65\cdot 4^4=16976$ delta-diagrams to identify. We tabulate knots up to mirror images so we only need half the number of the diagrams. 

\begin{figure}[h!t]
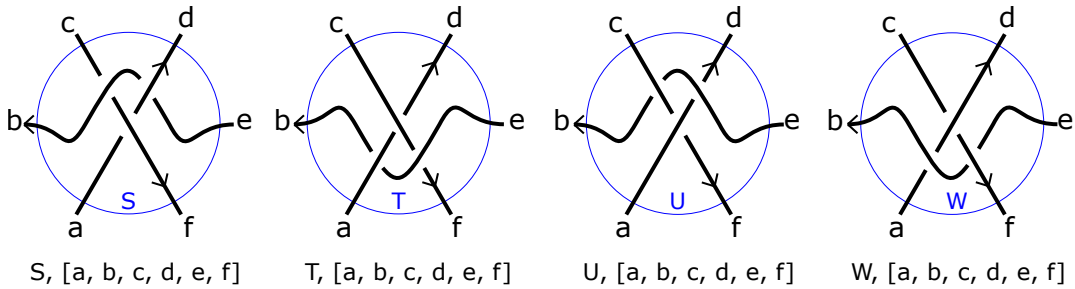

	\centering		
	\begin{lpic}[]{./STUW(14cm)}
		
	\end{lpic}
	\caption{Four types of delta-crossing.}
	\label{STUW}
\end{figure}

We identify each of the diagrams with the similar method as in our previous paper \cite{Jab22}, but now it turns out that we need much stronger invariants. To identify types of knots, we use mostly the HOMFLY-PT polynomial and the Khovanov homology  (calculated with \cite{SageMath}), we also use the knot Floer homology and Volume of the knot exterior (calculated with \cite{Snappy}) where they are needed. After that there were $12$ diagrams left for each of the trouble pairs ($11n76$ and $\overline{11n78}$), ($\overline{11n71}$, $11n75$) and ($11a44$, $11a47$) to identify, to do this we used diagrammatic manipulations (using \cite{KnotFolio} and verified using \cite{KLO22}). We also noticed that the quadruple $$\mathcal F(K)=(c(K),\text{HOMFLY-PT}(K), \text{Khovanov homology}(K), \text{knot Floer homology}(K))$$ does not determine the delta-crossing number $c_{\Delta}$, because for example\\ $\mathcal F(11a104)=\mathcal F(11a168)$ but $c_{\Delta}(11a104)=4$ and $c_{\Delta}(11a168)\not=4$.
\par
The number of knots and their names with a specific delta-crossing number is presented in Table \ref{table1}. We encode each delta-diagram as a \emph{dPD} code as a list in the format presented by an example in Figure \ref{K11n30}.

\begin{figure}[h!t]
	\centering		
	\begin{lpic}[]{./K11n30(12cm)}
		
	\end{lpic}
	\caption{A delta-diagram of a knot K11n30 and its dPD code.}
	\label{K11n30}
\end{figure}

Minimal diagrams of (unoriented) prime knots with delta-crossing number up to three are presented (in the form of its dPD codes) in Table \ref{table2}, the case for delta-diagrams with four delta-crossing can be found in the \texttt{delta4.txt} file in the \texttt{LaTeX} source file of this article’s \texttt{arXiv} preprint version. In that archive there are also text files of \texttt{sPD} codes of all the mentioned triple-crossing projections from $Tb_n$ sets.

\begin{footnotesize}
	
	\begin{center}

	\begin{longtable}[ht]{r||c|l}
			\caption{Enumeration of knots.\label{table1}}\\
	$\Delta$-crossings	& \# knots& name of knots\\
			\hline
\endfirsthead
\multicolumn{3}{c}
{\tablename\ \thetable\ -- \textit{Continued from previous page}} \\
$\Delta$-crossings	& \# knots& name of knots \\
\hline
\endhead
\hline \multicolumn{3}{r}{\textit{Continued on next page}} \\
\endfoot
\hline
\endlastfoot
		$1$&$1$&$3a1$.\\
		\hline
		$2$&$4$&$4a1$, $5a1$, $5a2$, $6a1$.\\
		\hline
		$3$&$21$&$6a2$,
		$6a3$,
		$7a1$,
		$7a2$,
		$7a3$,
		$7a4$,
		$7a5$,
		$7a6$,
		$7a7$,
		$8a1$,
		$8a2$,
		$8a3$,
		$8a4$,
		$8a6$,\\
		&&
		$8a7$,
		$8n1$,
		$8n2$,
		$8n3$,
		$9a5$,
		$9a13$,
		$9a16$.\\
		\hline
		$4$&$320$&
		 $8a5$,
		$8a8$,
		$8a9$,
		$8a10$,
		$8a11$,
		$8a12$,
		$8a13$,
		$8a14$,
		$8a15$,
		$8a16$,
		$8a17$,
		$8a18$,\\
		&&
		$9a1$,
		$9a2$,
		$9a3$,
		$9a4$,
		$9a6$,
		$9a7$,
		$9a8$,
		$9a9$,
		$9a10$,
		$9a11$,
		$9a12$,
		$9a14$,
		$9a15$,\\
		&&
		$9a17$,
		$9a18$,
		$9a19$,
		$9a20$,
		$9a21$,
		$9a22$,
		$9a23$,
		$9a24$,
		$9a25$,
		$9a26$,
		$9a27$,\\
		&&
		$9a28$,
		$9a29$,
		$9a30$,
		$9a32$,
		$9a33$,
		$9a34$,
		$9a35$,
		$9a36$,
		$9a38$,
		$9a39$,
		$9a40$,\\
		&&
		$9a41$,
		$9n1$,
		$9n2$,
		$9n3$,
		$9n4$,
		$9n5$,
		$9n6$,
		$9n7$,
		$9n8$,
		$10a3$,
		$10a4$,
		$10a5$,\\
		&&
		$10a6$,
		$10a7$,
		$10a8$,
		$10a9$,
		$10a10$,
		$10a11$,
		$10a12$,
		$10a13$,
		$10a14$,
		$10a15$,\\
		&&
		$10a16$,
		$10a17$,
		$10a18$,
		$10a19$,
		$10a20$,
		$10a21$,
		$10a25$,
		$10a26$,
		$10a28$,\\
		&&
		$10a29$,
		$10a30$,
		$10a31$,
		$10a32$,
		$10a33$,
		$10a34$,
		$10a36$,
		$10a37$,
		$10a38$,\\
		&&
		$10a39$,
		$10a40$,
		$10a41$,
		$10a42$,
		$10a43$,
		$10a44$,
		$10a45$,
		$10a47$,
		$10a48$,\\
		&&
		$10a49$,
		$10a50$,
		$10a51$,
		$10a52$,
		$10a55$,
		$10a56$,
		$10a57$,
		$10a58$,
		$10a63$,\\
				&&
		$10a64$,
		$10a66$,
		$10a67$,
		$10a68$,
		$10a69$,
		$10a72$,
		$10a78$,
		$10a79$,
		$10a80$,\\
		&&
		$10a85$,
		$10a88$,
		$10a93$,
		$10a94$,
		$10a99$,
		$10a102$,
		$10a103$,
		$10a106$,
		$10a107$,\\
		&&
		$10a108$,
		$10a109$,
		$10a118$,
		$10a121$,
		$10n1$,
		$10n2$,
		$10n4$,
		$10n5$
		$10n6$,\\
		&&
		$10n7$,
		$10n8$,
		$10n9$,
		$10n10$,
		$10n11$,
		$10n12$,
		$10n13$,
		$10n15$,
		$10n16$,
		$10n17$,\\
		&&
		$10n18$,
		$10n19$,
		$10n20$,
		$10n21$,
		$10n22$,
		$10n23$,
		$10n25$,
		$10n26$,
		$10n27$,\\
		&&
		$10n28$,
		$10n29$,
		$10n30$,
		$10n31$,
		$10n32$,
		$10n33$,
		$10n34$,
		$10n35$,
		$10n36$,\\
		&&
		$10n37$,
		$10n38$,
		$10n42$,
		$11a11$,
		$11a14$,
		$11a18$,
		$11a22$,
		$11a23$,
		$11a27$,\\
		&&
		$11a30$,
		$11a32$,
		$11a35$,
		$11a36$,
		$11a40$,
		$11a41$,
		$11a43$,
		$11a44$,
		$11a46$,\\
		&&
		$11a47$,
		$11a50$,
		$11a52$,
		$11a77$,
		$11a83$,
		$11a85$,
		$11a91$,
		$11a94$,
		$11a95$,\\
		&&
		$11a100$,
		$11a101$,
		$11a104$,
		$11a106$,
		$11a107$,
		$11a109$,
		$11a111$,
		$11a114$,\\
		&&
		$11a117$,
		$11a123$,
		$11a134$,
		$11a135$,
		$11a136$,
		$11a138$,
		$11a148$,
		$11a155$,\\
		&&
		$11a175$,
		$11a177$,
		$11a178$,
		$11a186$,
		$11a191$,
		$11a192$,
		$11a197$,
		$11a198$,\\
		&&
		$11a200$,
		$11a212$,
		$11a327$,
		$11a329$,
		$11n1$,
		$11n2$,
		$11n16$,
		$11n21$,
		$11n22$,\\
		&&
		$11n23$,
		$11n24$,
		$11n30$,
		$11n54$,
		$11n55$,
		$11n56$,
		$11n61$,
		$11n69$,
		$11n71$,\\
		&& 
		$11n72$,
		$11n73$,
		$11n74$,
		$11n75$,
		$11n76$,
		$11n77$,
		$11n78$,
		$11n82$,
		$11n85$,\\
		&&
		$11n86$,
		$11n87$,
		$11n90$,
		$11n92$,
		$11n93$,
		$11n94$,
		$11n95$,
		$11n96$,
		$11n105$,\\
		&&
		$11n106$,
		$11n107$,
		$11n118$,
		$11n119$,
		$11n126$,
		$11n136$,
		$11n153$,
		$11n156$,\\
		&&
		$11n162$,
		$11n169$,
		$12a58$,
		$12a99$,
		$12a104$,
		$12a119$,
		$12a268$,
		$12a273$,
		$12a281$,\\
		&&
		$12a295$,
		$12a313$,
		$12a323$,
		$12a327$,
		$12a345$,
		$12a353$,
		$12a426$,
		$12a435$,\\
		&&
		$12a499$,
		$12a510$,
		$12a514$,
		$12a561$,
		$12a615$,
		$12a628$,
		$12a629$,
		$12a631$,\\
		&&
		$12a633$,
		$12a653$,
		$12a656$,
		$12a868$,
		$12a875$,
		$12a960$,
		$12a1097$,
		$12a1188$,\\
		&&
		$12a1189$,
		$12a1251$,
		$12n41$,
		$12n77$
		$12n177$,
		$12n188$,
		$12n245$,
		$12n289$,\\
		&&
		$12n308$,
		$12n326$,
		$12n327$,
		$12n328$,
		$12n341$,
		$12n379$,
		$12n380$,
		$12n406$,\\
		&&
		$12n416$,
		$12n417$,
		$12n425$,
		$12n426$,
		$12n477$,
		$12n503$,
		$12n508$,
		$12n518$,\\
		&&
		$12n538$,
		$12n549$,
		$12n591$,
		$12n592$,
		$12n600$,
		$12n609$,
		$12n703$,
		$12n706$.\\

	\end{longtable}
\end{center}

\end{footnotesize}

\pagebreak

\begin{footnotesize}
	\renewcommand{\arraystretch}{1.25}
	
	\begin{center}
		\begin{longtable}{r||l}
			\caption{Minimal diagrams of prime knots with delta-crossing number up to three\label{table2}}\\
			knot & $dPD$ code of a minimal knot diagram \\
			\hline
			\endfirsthead
			\multicolumn{2}{c}
			{\tablename\ \thetable\ -- \textit{Continued from previous page}} \\
			knot & $dPD$ code of a minimal knot diagram \\
			\hline
			\endhead
			\hline \multicolumn{2}{r}{\textit{Continued on next page}} \\
			\endfoot
			\hline
			\endlastfoot
			3a1& [W, [1, 3, 3, 2, 2, 1]]\\
4a1& [S, [5, 2, 4, 6, 1, 5], W, [2, 1, 3, 3, 6, 4]]\\
5a1& [W, [5, 2, 4, 6, 1, 5], W, [2, 1, 3, 3, 6, 4]]\\
5a2& [W, [5, 2, 4, 6, 1, 5], U, [2, 1, 3, 3, 6, 4]]\\
6a1& [W, [5, 2, 4, 6, 1, 5], S, [2, 1, 3, 3, 6, 4]]\\
6a2 & [S, [7, 2, 6, 8, 1, 7], W, [2, 1, 3, 3, 9, 4], U, [4, 9, 5, 5, 8, 6]]\\
6a3 & [S, [7, 2, 6, 8, 1, 7], W, [2, 1, 3, 3, 9, 4], W, [4, 9, 5, 5, 8, 6]]\\
7a1 & [T, [4, 2, 5, 5, 1, 6], T, [7, 3, 8, 8, 2, 9], U, [9, 4, 6, 1, 3, 7]]\\
7a2 & [U, [7, 2, 6, 8, 1, 7], T, [2, 1, 3, 3, 9, 4], U, [4, 9, 5, 5, 8, 6]]\\
7a3 & [W, [7, 2, 6, 8, 1, 7], W, [2, 1, 3, 3, 9, 4], U, [4, 9, 5, 5, 8, 6]]\\
7a4 & [W, [7, 2, 6, 8, 1, 7], W, [2, 1, 3, 3, 9, 4], W, [4, 9, 5, 5, 8, 6]]\\
7a5 & [W, [4, 2, 5, 5, 1, 6], U, [7, 3, 8, 8, 2, 9], W, [9, 4, 6, 1, 3, 7]]\\
7a6 & [W, [4, 2, 5, 5, 1, 6], W, [7, 3, 8, 8, 2, 9], W, [9, 4, 6, 1, 3, 7]]\\
7a7 & [U, [4, 2, 5, 5, 1, 6], U, [7, 3, 8, 8, 2, 9], W, [9, 4, 6, 1, 3, 7]]\\
8a1 & [T, [4, 2, 5, 5, 1, 6], U, [7, 3, 8, 8, 2, 9], U, [9, 4, 6, 1, 3, 7]]\\
8a2 & [U, [7, 2, 6, 8, 1, 7], U, [2, 1, 3, 3, 9, 4], U, [4, 9, 5, 5, 8, 6]]\\
8a3 & [T, [7, 2, 6, 8, 1, 7], S, [2, 1, 3, 3, 9, 4], U, [4, 9, 5, 5, 8, 6]]\\
8a4 & [T, [7, 2, 6, 8, 1, 7], T, [2, 1, 3, 3, 9, 4], U, [4, 9, 5, 5, 8, 6]]\\
8a6 & [S, [4, 2, 5, 5, 1, 6], U, [7, 3, 8, 8, 2, 9], T, [9, 4, 6, 1, 3, 7]]\\
8a7 & [T, [4, 2, 5, 5, 1, 6], U, [7, 3, 8, 8, 2, 9], T, [9, 4, 6, 1, 3, 7]]\\
8n1 & [S, [7, 2, 6, 8, 1, 7], S, [2, 1, 3, 3, 9, 4], U, [4, 9, 5, 5, 8, 6]]\\
8n2 & [S, [7, 2, 6, 8, 1, 7], U, [2, 1, 3, 3, 9, 4], U, [4, 9, 5, 5, 8, 6]]\\
8n3 & [W, [7, 2, 6, 8, 1, 7], U, [2, 1, 3, 3, 9, 4], U, [4, 9, 5, 5, 8, 6]]\\
9a5 & [T, [7, 2, 6, 8, 1, 7], U, [2, 1, 3, 3, 9, 4], U, [4, 9, 5, 5, 8, 6]]\\
9a13 & [U, [4, 2, 5, 5, 1, 6], U, [7, 3, 8, 8, 2, 9], T, [9, 4, 6, 1, 3, 7]]\\
9a16 & [U, [4, 2, 5, 5, 1, 6], U, [7, 3, 8, 8, 2, 9], U, [9, 4, 6, 1, 3, 7]]\\
			\hline
		\end{longtable}
	\end{center}
\end{footnotesize}

\end{document}